\DeclareMathAlphabet{\varmathbb}{U}{pxsyb}{m}{n}
\def\leq{\leqslant}
\def\phi{\varphi}
\def\kappa{\varkappa}
\newcommand{\D}{\mathrm{d}\kern0.2pt}%
\newcommand{\E}{\mathrm{e}\kern0.2pt} 
\newcommand{\ii}{\kern0.05em\mathrm{i}\kern0.05em}
\newcommand{\RR}{\mathbb{R}}%
\newtheorem{theorem}{\bf \indent Theorem}[section]
\newtheorem{proposition}{\bf \indent Proposition}[section]
\theoremstyle{remark}
\numberwithin{equation}{section}
\begin{document}

\noindent {\Large \bf  The fundamental eigenfrequency is simple \\[3pt] in the
two-dimensional sloshing problem}

\vskip5mm

{\bf Nikolay Kuznetsov}

\vskip-2pt {\small Laboratory for Mathematical Modelling of Wave Phenomena}
\vskip-4pt {\small Institute for Problems in Mechanical Engineering} \vskip-4pt
{\small Russian Academy of Sciences} \vskip-4pt {\small V.O., Bol'shoy pr. 61, St.
Petersburg, 199178} \vskip-4pt {\small Russian Federation} \vskip-4pt {\small
nikolay.g.kuznetsov@gmail.com}

\vskip4mm

\parbox{134mm} {\noindent The two-dimensional sloshing problem is considered; it
describes the transversal free oscillations of water in an open, infinitely long
canal of uniform cross-section. It is proved that the fundamental eigenfrequency is
simple, whereas the corresponding velocity potential has only one nodal line
connecting the free surface and the bottom; its harmonic conjugate (stream function)
does not change sign under the proper choice of the additive constant.

\vspace{2mm}

MSC 2020: 35P05, 35Q35, 35J05, 76B20

\vspace{2mm}

Key words: sloshing problem, fundamental eigenvalue, variational principle, stream
function}

\section{Introduction and the main result}

\noindent This paper deals with a spectral problem usually referred to as the
two-dimensional sloshing problem; it describes the frequencies and modes of the
transversal free oscillations of water in an infinitely long canal having a uniform
cross-section.

\subsection{Statement of the sloshing problem}
 
Let the canal's cross-section $W \subset \RR^2_- = \{ (x,y) \in \RR^2: y < 0 \}$ be
a bounded, simply connected domain, whose piecewise smooth boundary $\partial W$
has no cusps. One of the open arcs forming $\partial W$ is an interval $F$ of the
$x$-axis (the free surface of water), and the bottom $B = \partial W\setminus
\overline F$ is the union of open arcs lying in $\RR^2_-$ complemented by corner
points (if there are any) connecting these arcs.

With a time-harmonic factor removed, the velocity potential $u(x,y)$ of the sloshing
motion satisfies the mixed Steklov--Neumann boundary value problem:
\begin{eqnarray}
&& u_{xx} + u_{yy} = 0\quad {\rm in}\ W, \label{lap} \\ && u_y =
\nu u\quad {\rm on}\ F, \label{nu} \\ && \partial u/\partial n =
0\quad {\rm on}\ B. \label{nc}
\end{eqnarray}
Here $\partial /\partial n$ stands for the exterior normal derivative on $B$. It is
convenient to complement this problem with the orthogonality condition
\begin{equation}
\int_F u (x, 0) \, \D x = 0 , \label{ort}
\end{equation}
thus excluding the zero eigenvalue of \eqref{lap}--\eqref{nc}. Then the hydrodynamic
interpretation of the spectral parameter $\nu > 0$ is $\nu = \omega^2/g$, where
$\omega$ is the radian frequency of the water oscillations and $g$ is the
acceleration due to gravity.

In what follows, problem \eqref{lap}--\eqref{ort} is called the {\it 2D sloshing
problem}; its statement given above is not the most general one, but it commonly
used in applications. Over more than two centuries, the problem has been the subject
of a great number of studies; a historical review can be found in \cite{FK}. Since
the 1950s, it is known that this problem has a discrete spectrum; that is, there
exists a sequence of eigenvalues
\begin{equation*}
0<\nu_1 \leq \nu_2 \leq \dots \leq \nu_n \leq \dots ,
\label{seq}
\end{equation*}
each counted according to its multiplicity, and it tends to infinity as $n \to
\infty$; see, for example,~\cite[\S~3.3.4]{KK}.

\subsection{Background and the main result}

Exact solutions of the 2D sloshing problem are known for a few geometries (a list is
given in \cite{FK}); most of them may be obtained either by separation of variables
or by an inverse procedure which seeks a region associated with a specified
eigenfunction (see, for example, the recent note \cite{W}). All eigenvalues are
simple for each of these geometries. Another problem for which the simplicity of all
eigenvalues was proved is the 2D ice-fishing problem (see \cite{KM}), wherein the
water domain $W = \RR^2_-$ is covered by the rigid ice along $B = \partial \RR^2_-
\setminus \overline F$, whereas the free surface $F$ is either $\{ |x| < 1, y=0 \}$
(a single gap in the ice) or $\{ b < |x| < b+1, y=0 \}$ (two gaps at the spacing $2
b > 0$).

The recent immense article \cite{SS}, concerning the sharp spectral asymptotics for
the 2D sloshing problem in domains with corners, shows that it would be extremely
difficult to establish the simplicity of all sloshing eigenvalues in the case of a
general water domain. Therefore, our aim is more modest; namely, to prove the
following.

\begin{theorem}
{\rm(i)} The fundamental eigenvalue $\nu_1$ of problem \eqref{lap}--\eqref{ort} is
simple. {\rm(ii)} The corresponding eigenfunction $u_1$ has only one nodal line
connecting $F$ and $\overline B$.
\end{theorem}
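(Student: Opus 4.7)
The plan is to combine a Courant-type nodal-domain bound for $u_1$ with a sign property of its stream function so as to prove (i) and (ii) in a single argument. To begin, I would record the Rayleigh characterization
\[
\nu_1 \;=\; \min\Bigl\{\, \int_W |\nabla u|^2\,\D x\D y \,\Big/\, \int_F u^2\,\D x \;:\; u\in H^1(W),\ \int_F u\,\D x = 0,\ u\not\equiv 0 \,\Bigr\},
\]
and, using that $W$ is simply connected, introduce for every real eigenfunction $u$ its single-valued harmonic conjugate $v$ (the stream function), determined by $u_x = v_y$, $u_y = -v_x$ up to an additive constant. The Neumann condition \eqref{nc} is equivalent to $v$ being constant on $B$, so I normalize $v\equiv 0$ on $\overline B$. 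The Steklov condition \eqref{nu} then reads $v_x = -\nu u$ on $F$, so $v(x,0) = -\nu \int_{x_-}^x u(\xi,0)\,\D\xi$ with $x_-$ the left endpoint of $F$; the orthogonality \eqref{ort} ensures $v$ vanishes at the right endpoint $x_+$ as well, hence $v$ vanishes on all of $\partial W$ outside the interior of $F$.

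Next I would analyze the nodal set of $u_1$. Adjoining the trivial eigenvalue $0$ (with constant eigenfunctions) to the problem \eqref{lap}--\eqref{nc} without \eqref{ort}, the value $\nu_1$ becomes the second eigenvalue of the unconstrained Steklov--Neumann problem. A Courant-type nodal-domain bound in this setting yields that $u_1$ has at most two nodal domains in $W$, and \eqref{ort} forces $u_1$ to change sign on $F$, so it has exactly two. These are separated by a single simple nodal arc $\gamma \subset \overline W$ whose two endpoints lie on $\partial W$. Both endpoints cannot lie in $\overline B$ (else $u_1$ would keep one sign on $F$, contradicting \eqref{ort}); the main remaining task is to exclude the case that both endpoints lie in $F$. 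Once that case is ruled out, $\gamma$ connects $F$ to $\overline B$, proving (ii), and $u_1$ has exactly one sign change on $F$.

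Choosing the sign of $u_1$ so that $u_1>0$ on the left sub-arc of $F$ and $u_1<0$ on the right one, the formula $v_1(x,0)=-\nu_1\int_{x_-}^x u_1\,\D\xi$ gives $v_1\le 0$ on $F$, with strict inequality on the interior of $F$. Combined with $v_1=0$ on $B$, the strong maximum principle together with the Hopf boundary-point lemma at smooth points of $B$ yield $v_1<0$ throughout $W$. For claim (i), suppose $\tilde u_1$ is a first eigenfunction linearly independent of $u_1$, with stream function $\tilde v_1$ normalized to vanish on $B$; by the same reasoning $\tilde v_1<0$ in $W$ after a sign choice. Fix any interior $p\in W$ and consider $u_\ast = \tilde v_1(p)\,u_1 - v_1(p)\,\tilde u_1$, whose stream function is $v_\ast = \tilde v_1(p)\,v_1 - v_1(p)\,\tilde v_1$. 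Then $v_\ast$ vanishes on $B$, is the stream function of the first eigenfunction $u_\ast$, and satisfies $v_\ast(p)=0$ by construction. Applying the sign-definite property to $u_\ast$ forces $v_\ast\equiv 0$; by the Cauchy--Riemann equations $u_\ast$ is then constant, hence zero by \eqref{ort}, contradicting the linear independence of $u_1$ and $\tilde u_1$.

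The chief difficulty is excluding the $F$-to-$F$ configuration for $\gamma$: one expects this to follow from a variational comparison between the mixed Steklov--Dirichlet problems induced on the two candidate nodal subdomains and the original problem on $W$, but the obvious extension-by-zero test function yields a Rayleigh quotient strictly \emph{larger} than $\nu_1$ once the orthogonality correction is enforced, so the comparison must be refined, perhaps by exploiting the sign pattern that the $F$-to-$F$ configuration would force on $v_1$. A complementary technicality is the rigorous formulation of Courant's nodal-domain bound for the mixed Steklov--Neumann problem on a piecewise-smooth domain with corners at $\overline F\cap\overline B$, which requires sufficient boundary regularity of $u_1$ and $v_1$ near those corners.
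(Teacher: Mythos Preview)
Your proposal has the right architecture but leaves the decisive step open, as you yourself flag: the exclusion of the $F$--to--$F$ configuration for the nodal arc of $u_1$. Everything downstream (the sign of $v_1$ in $W$, and hence your simplicity argument) hinges on that exclusion, so the proposal is at present a sketch rather than a proof. The obstruction you identify is real: working with the Rayleigh quotient for $u$, any test function built by truncation must be corrected to satisfy $\int_F u=0$, and this destroys the equality in the quotient. The paper avoids this by switching the variational principle from $u$ to the stream function $v$. For $v$ one has the ``inverted'' quotient
\[
\nu_1=\min_{w\in\mathcal H}\ \frac{\int_F w_x^2\,\D x}{\int_W |\nabla w|^2\,\D x\,\D y},
\qquad \mathcal H=H^1_B(W)\cap H^1_0(F),
\]
over functions vanishing on $B$ and at the endpoints of $F$, with \emph{no} orthogonality constraint. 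Restricting $v$ to one of its putative nodal domains and extending by zero then produces a legitimate minimiser, and unique continuation gives the contradiction. To make this work the paper first argues that $v$ cannot have more than one interior zero on $F$ (equivalently, $u_1(\cdot,0)$ changes sign at most twice), invoking Corollary~2.9 of \cite{KKM}; this step is what forces each nodal domain of $v$ to meet $F$ in a single subinterval. With that in hand one gets not only that $v_1$ has one sign in $W$ but the sharper fact that its trace on $F$ has a \emph{single} extremum; the paper then proves simplicity by a trace argument on $F$ (combining two positive stream eigenfunctions so as to create too many extrema), rather than by your interior-point combination.

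Your route to (i), via forcing the stream function of a linear combination to vanish at an interior point, is a perfectly good alternative once sign-definiteness of $v_1$ is available; it is arguably cleaner than the paper's extrema-counting. But you still need the $F$--to--$F$ exclusion, and the way forward is the variational principle for $v$ rather than for $u$, supplemented by a bound on the number of sign changes of $u_1|_F$ (the paper imports this from \cite{KKM}). Your ancillary worries---the Courant bound for the mixed Steklov--Neumann problem with corners, and the claim that the nodal set of $u_1$ is a single simple arc---are handled in the paper by citing Kuttler's original Courant-type argument (his fallacious lemma is not actually needed for the two-nodal-domain bound) and by the structure theory of nodal sets of harmonic functions; but these are secondary compared with the main gap.
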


It turns out that this assertion is still an open question for an arbitrary bounded
$W$ despite several attempts to resolve it. The first one was made by Kuttler
\cite{Kut}, whose proof used the following fallacious lemma:
\begin{quote}
{\it Nodal lines of an eigenfunction of problem \eqref{lap}--\eqref{ort} have one
end on the free surface and the other one on the bottom.}
\end{quote}
A counterexample to this assertion was constructed in the paper \cite{KKM}, in which
the authors tried to give their own proof of Theorem~1.1. As is shown in the next
section, their approach actually leads to the result, but, unfortunately, an
incorrect inequality was applied in \cite{KKM} on the final stage of the proof. The
defect was corrected in \cite{KKM1}, but at the expense of an extra assumption
imposed on $W$. Namely, it must satisfy John's condition confining $W$ to the strip
between vertical lines drown through the endpoints of $\overline F$.

\section{Proof of Theorem 1.1}

Since $u_y - \nu u$ vanishes on $F$, its extension across $F$ as an odd function of
$y$ is harmonic. This yields a representation of $u (x, y)$ valid on both sides of
$F$; see \cite[p.~95]{John1} for details. Therefore, the Cauchy--Riemann equations
for $u$ and the stream function $v$ (a harmonic conjugate of $u$ in $W$) are valid
on $F$ as well; moreover, these functions are differentiable along $F$.

Following the approach proposed in \cite{KKM}, let us consider an equivalent to
\eqref{lap}--\eqref{ort} spectral problem for $v$. Indeed, the Cauchy--Riemann
equations reduce \eqref{lap}--\eqref{ort} to
\begin{eqnarray}
&& v_{xx} + v_{yy} = 0 \quad {\rm in} \ W , \label{lapv} \\ && -v_{xx} = \nu v_y
\quad {\rm on} \ F , \label{nuv} \\ && v = 0 \quad {\rm on} \ B , \label{dcv}
\end{eqnarray}
and vice versa. Notice that obtaining condition \eqref{dcv} requires also an
appropriate choice of the additive constant. Besides, it implies both conditions
\eqref{nc} and \eqref{ort}. It is obvious that all eigenvalues of problems
\eqref{lapv}--\eqref{dcv} and \eqref{lap}--\eqref{ort} have the same multiplicity.

\subsection{Variational principle for the stream function}

The variational principle for problem \eqref{lapv}--\eqref{dcv} was proposed in
\cite{KKM}, but it involves nonlocal operators which, presumably, entails its
disadvantage. Prior to introducing a local variational principle, it is convenient
to consider an appropriate weak formulation of problem \eqref{lapv}--\eqref{dcv}. To
derive the requisite integral identity we write the first Green's identity for $v$
and transform it with the help of \eqref{nuv} and integration by parts
\begin{equation}
\int_W |\nabla v|^2 \D x \, \D y = \int_F v_y \, v \, \D x = - \nu^{-1} \int_F
v_{xx} \, v \, \D x = \nu^{-1} \int_F v_{x}^2 \, \D x \, , \label{Gr}
\end{equation}
where the integrated term vanishes in view of \eqref{dcv}; here and below $\nabla v
= (v_x, v_y)$.

Thus, it is reasonable to seek a weak solution in $\mathcal H = H^1_B (W) \cap H^1_0
(F)$. Here $H^1_B (W)$ is the subspace of the Sobolev space $H^1 (W)$ consisting of
functions that vanish on~$B$ (see \newline \cite[\S 7.1]{A} for details). An
equivalent norm in $H^1_B (W)$ is equal to the integral on the left-hand side of
\eqref{Gr}; moreover, $H^1_B (W)$ is isomorphic to $H^{1/2} (F)$. By $H^1_0 (F)$ we
denote the closure of smooth, compactly supported on $F$ functions in the $H^1 (F)$
norm; therefore, an equivalent norm in $H^1_0 (F)$ is equal to the integral on the
right-hand side of \eqref{Gr}.

Thus, a weak solution of problem \eqref{lapv}--\eqref{dcv} is $v \in \mathcal H$ if
the following integral identity
\[ \nu \int_W \nabla v \cdot \nabla \psi \, \D x \, \D y = \int_F v_x \, \psi_x \, \D x
\]
holds for an arbitrary $\psi \in \mathcal H$. This suggests the following
variational principle
\begin{equation}
\nu_1 = \min_{w \in \mathcal H} \ \frac{\int_F w_x^2 \, \D x}{\int_W |\nabla w|^2 \,
\D x \D y} \, . \label{vp}
\end{equation}
for the fundamental eigenvalue $\nu_1$ of this problem. It was mentioned that the
quadratic form in the denominator is equivalent to the norm in $H^{1/2} (F)$. Since
$H^1_0 (F)$ is compactly embedded into the latter space, there exists a nontrivial
$w^*$, which delivers minimum to to the variational quotient \eqref{vp}. Moreover,
it is easy to verify that $w^*$ satisfies problem \eqref{lapv}--\eqref{dcv} with
$\nu = \nu_1$.

\subsection{Auxiliary results and proof of Theorem 1.1}

Let $N (v) = \{ (x,y) \in \overline W: \, v (x,y)=0 \}$ denote the set of nodal
lines of a sloshing eigenfunction $v$. A connected component of $W \setminus N$ is
called a nodal domain of $v$. A key assertion for our considerations is the following
analogue of the Courant nodal domain theorem.

\begin{proposition}
Any stream eigenfunction corresponding to the eigenvalue $\nu_1$ has a single nodal
domain.
\end{proposition}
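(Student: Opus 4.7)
The plan is to adapt the $k=1$ case of the classical Courant nodal-domain argument to the variational principle \eqref{vp}. Suppose, for contradiction, that a $\nu_1$-eigenfunction $v$ of \eqref{lapv}--\eqref{dcv} has nodal domains $\Omega_1,\dots,\Omega_k$ with $k\ge 2$, and for each $i$ set
\[
w_i:=v\,\mathbf{1}_{\Omega_i},
\]
extended by zero to the rest of $W$. The goal is to show that each $w_i$ must itself minimize the Rayleigh quotient in \eqref{vp}, and then to derive a contradiction from unique continuation.

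First I would verify that $w_i\in\mathcal H$. Continuous vanishing of $v$ on $\partial\Omega_i\cap W$ keeps the truncation in $H^1(W)$; condition \eqref{dcv} for $v$ gives $w_i=0$ on $B$; and since the relative boundary in $F$ of the free-surface trace of $\overline\Omega_i$ consists of nodal points of $v$ or of endpoints of $F\subset\overline B$ where $v$ already vanishes, the trace $w_i|_F$ sits in $H^1_0(F)$. Writing $A(w)=\int_F w_x^2\,\D x$ and $D(w)=\int_W|\nabla w|^2\,\D x\D y$, the disjointness of the supports of the $w_i$ gives
\[
A(v)=\sum_{i=1}^k A(w_i),\qquad D(v)=\sum_{i=1}^k D(w_i),
\]
while $v$ itself satisfies $A(v)=\nu_1 D(v)$ by \eqref{Gr}. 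The variational principle \eqref{vp} applied to each $w_i$ then yields $A(w_i)\ge\nu_1 D(w_i)$, with $D(w_i)>0$ (else $w_i\equiv 0$ in $W$, contradicting $v\not\equiv 0$ on $\Omega_i$). Summing these inequalities and comparing with the identity $A(v)=\nu_1 D(v)$ forces equality in each, so every $w_i$ is itself a minimizer of \eqref{vp} and hence a weak solution of \eqref{lapv}--\eqref{dcv} at $\nu=\nu_1$.

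Testing this weak form against $\psi\in C_c^\infty(W)$ annihilates the boundary integral over $F$ and reads $\Delta w_i=0$ in $W$ in the distributional, hence classical, sense. But $w_i$ vanishes identically on the nonempty open set $\Omega_j$ whenever $j\ne i$, so unique continuation for harmonic functions forces $w_i\equiv 0$ in $W$, contradicting the choice of $\Omega_i$. Consequently $k=1$.

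The step I expect to be the main obstacle is the verification that each $w_i$ really belongs to $\mathcal H$. The counterexample to Kuttler's lemma recalled in the introduction shows that the nodal set of $v$ may contain configurations more subtle than arcs joining $F$ to $B$---for instance, closed loops inside $W$ or arcs meeting $\overline B$ at corners of $\partial W$---so that $\partial\Omega_i\cap W$ can be quite irregular. Handling these configurations will rely on the boundary regularity of $v$ provided by the harmonic extension across $F$ recalled at the start of Section~2, together with the piecewise-smooth, cusp-free hypothesis on $\partial W$.
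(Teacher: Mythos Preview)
Your argument is correct and rests on the same Courant-type mechanism as the paper: cut $v$ down to a nodal domain, observe that the truncation still minimises the quotient \eqref{vp}, and derive a contradiction from unique continuation for harmonic functions. The difference is one of organisation. The paper first invokes an external bound on the sign changes of the conjugate trace $u(x,0)$ (\cite[Corollary~2.9]{KKM}) to exclude three or more nodal domains and to force each of the remaining two to meet $F$ in a single subinterval; only then does it compute the Rayleigh quotient of $\psi=v\,\mathbf{1}_{W'}$ directly via Green's identity on $W'$. Your additivity identities $A(v)=\sum_i A(w_i)$ and $D(v)=\sum_i D(w_i)$ dispense with that preliminary reduction and handle any number of nodal domains at once, so your route to Proposition~2.1 is more self-contained. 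The extra work in the paper is not wasted, though: the information extracted from \cite{KKM} about the trace of $v$ on $F$ is exactly what feeds into Proposition~2.2, which your argument does not touch. As for the obstacle you flag concerning $w_i\in\mathcal H$, the harmonic extension of $v$ across $F$ recalled at the start of Section~2 makes $v|_F$ real-analytic with isolated zeros, so the $H^1_0(F)$ verification is routine; the paper itself disposes of the analogous step with ``It is clear that $\psi\in\mathcal H$.''
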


\begin{proof}
Assuming the contrary, we denote by $W'$ and $W''$ two nodal domains of an
eigenfunction $v$. Each of them is bounded above by a single subinterval of $F$, say
$F'$ and $F''$, respectively. Otherwise, the trace $v (x, 0)$ has two zeros inside
$F$ (as well as in the case of three nodal domains), and so there are three critical
points, where $v_x (x, 0)$ changes sign. Then the Cauchy--Riemann equations and
condition \eqref{nu} imply that the harmonic conjugate $u (x, 0)$ also changes sign
three times. But this is impossible because it is known that $u (x, 0)$ changes sign
not more than two times; see \cite[Corollary~2.9]{KKM}.

Let us define $\psi$ on $\overline W$ as follows: $\psi = v$ on $\overline{W'}$ and
$\psi = 0$ elsewhere. It is clear that $\psi \in \mathcal H$. Similarly to
\eqref{Gr} we have:
\begin{equation*}
\int_W |\nabla \psi|^2 \D x \, \D y = \int_{W'} |\nabla v|^2 \D x \, \D y =
\int_{F'} v_y \, v \, \D x = \nu^{-1}_1 \int_{F'} v_{x}^2 \, \D x = \nu^{-1}_1
\int_F \psi_{x}^2 \, \D x .
\end{equation*} 
Hence $\psi$ delivers minimum to the variational quotient \eqref{vp}, and so it is
an eigenfunction of problem \eqref{lapv}--\eqref{dcv} corresponding to $\nu_1$.
However, this contradicts to the unique continuation property of harmonic functions
in view of the definition of $\psi$.
\end{proof}

This proof also implies the following.

\begin{proposition}
For any stream eigenfunction corresponding to $\nu_1$, its trace on $F$ cannot
change sign; moreover, it has a single extremum.
\end{proposition}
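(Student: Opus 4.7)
The plan is to derive Proposition~2.2 from Proposition~2.1 combined with the pointwise identity $v_x(x,0)=-\nu_1\,u(x,0)$ on $F$, which follows from the Cauchy--Riemann equations and \eqref{nu}, and with the bound from \cite[Corollary~2.9]{KKM} that $u(\cdot,0)$ changes sign at most twice on $F$---the very bound already used in the proof of Proposition~2.1. Transferred through the identity, this bounds the sign-change count of $v_x(\cdot,0)$ on $F$ by two as well.

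The non-sign-changing part of Proposition~2.2 is essentially a restatement of Proposition~2.1: a nontrivial harmonic function with a single nodal domain has constant sign on its domain, so after replacing $v$ by $-v$ if necessary one has $v\ge 0$ in $\overline W$, and in particular $v(\cdot,0)\ge 0$ on $F$.

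For the single-extremum part, I would write $F=(a,b)$; since $v\in\mathcal H=H^1_B(W)\cap H^1_0(F)$ and $v=0$ on $B$, continuity up to the endpoints gives $v(a,0)=v(b,0)=0$. Provisionally assume that $v_x$ does not vanish identically on any subinterval of $F$. Then the continuous nonnegative trace $v(\cdot,0)$ vanishing at both endpoints has, for some integer $m\ge 1$, exactly $m$ interior local maxima and $m-1$ interior local minima; the resulting sign pattern of $v_x$ starts with $+$ just to the right of $a$ (otherwise $v$ would become negative near $a$), alternates, and ends with $-$ just to the left of $b$, producing $2m-1$ sign changes on $F$. The bound $2m-1\le 2$ forces $m=1$, so $v(\cdot,0)$ has a unique interior extremum, necessarily a maximum.

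The main obstacle is to discharge the provisional assumption, i.e.\ to exclude the possibility that $v_x\equiv 0$ on some subinterval $I\subset F$. In that event the identity $v_x=-\nu_1 u$ gives $u\equiv 0$ on $I$, and \eqref{nu} additionally yields $u_y=\nu_1 u\equiv 0$ on $I$. Harmonic unique continuation of Holmgren type---applied via the harmonic extension of $u$ across $F$ recalled at the start of Section~2, with the vanishing Cauchy data $u=u_y=0$ on $I$---then forces $u\equiv 0$ in $W$ and hence $v\equiv 0$, contradicting nontriviality. With this degeneracy excluded, the counting argument above closes the proof.
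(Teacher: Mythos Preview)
Your argument is correct and is precisely the unpacking of the paper's one-line proof (``This proof also implies the following''): the non-sign-changing claim is Proposition~2.1 restated, and the single-extremum claim is obtained by transferring the bound from \cite[Corollary~2.9]{KKM} on the sign changes of $u(\cdot,0)$ to $v_x(\cdot,0)$ via $v_x=-\nu_1 u$ on $F$, exactly as in the first paragraph of the proof of Proposition~2.1. Your explicit exclusion of the degenerate case $v_x\equiv 0$ on a subinterval is a welcome addition that the paper leaves implicit (it follows immediately from the real-analyticity of the harmonic extension across $F$ recalled at the start of Section~2).
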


\begin{proof}[Proof of Theorem 1.1]
(i) Let us assume the existence of two linearly independent stream eigenfunctions
corresponding to $\nu_1$, and obtain a contradiction from this assumption.
Proposition~2.1 allows us to suppose that these functions, say $v'$ and $v''$, are
positive; notice that this does not contradict the orthogonality condition $\int_W
\nabla v' \cdot \nabla v'' \, \D x \, \D y = 0$. Moreover, Proposition~2.2 implies
that each of these functions has a single extremum on~$F$; namely, maximum.

Let $M' \ (M'')$ denote the maximum value of $v'$ ($v''$, respectively) attained at
the point $(x', 0)$ ($(x'', 0)$, respectively). Consider $V (x) = M'' v' (x, 0) - M'
v'' (x, 0)$, which cannot vanish identically on $F$ even if $x' = x''$. In this
case, $V$ has three zeros on $\overline F$, and so at least two extrema, but this
contradicts Proposition~2.2. Finally, if $x' \neq x''$, then $V$ changes sign, which
is also impossible by Proposition~2.2.

(ii) Kuttler's reasoning (see \cite[p. 1236]{Kut}) turns out to be correct provided
the unnecessary reference to the fallacious lemma is omitted. Indeed, it is a
version of the original proof by Courant (see \cite[p.~452]{CH}), which implies that
$u_1$ does not have more than two nodal domains. Then condition \eqref{ort} yields
that two such domains really exist; the nodal line separating them has only one end
on $F$ in view of the second assertion of Proposition~2.2. Thus the proof is
complete.
\end{proof}

\vspace{-12mm}

\renewcommand{\refname}{
\begin{center}{\Large\bf References}
\end{center}}


\begin{thebibliography}{99}

\vspace{-2mm}

{\small

\bibitem{A} J.-P. Aubin, {\it Approximation of Elliptic Boundary-Value Problems.}
Wiley-Intersci., 1972.

\bibitem{CH} R. Courant, D. Hilbert, {\it Methods of Mathematical Physics.}
Vol.~\textbf{1}. Interscience, 1953.

\bibitem{FK} D. W. Fox, J. R. Kuttler, Sloshing frequencies. {\it Z. angew. Math.
Phys.} \textbf{34} (1983), 668--696.

\bibitem{John1} F. John, On the motion of floating bodies, II. {\it Comm. Pure
Appl. Math.} {\bf 3} (1950), 45--101.

\bibitem{KK} N. D. Kopachevsky, S. G. Krein, {\it Operator Approach to Linear
Problems of Hydrodynamics.} Vol.~\textbf{1}. Birkh\"auser Verlag, Basel, 2001.

\bibitem{KKM} V. Kozlov, N. Kuznetsov,  O. Motygin, On the two-dimensional sloshing
problem. {\it Proc. R. Soc. Lond. A} \textbf{460} (2004), 2587--2603.

\bibitem{KKM1} V. Kozlov, N. Kuznetsov,  O. Motygin, On the two-dimensional sloshing
problem. Correction. {\it Proc. R. Soc. Lond. A} \textbf{467} (2011), 2427--2430.

\bibitem{Kut} J. R. Kuttler, A nodal line theorem for the sloshing problem. {\it
SIAM J. Math. Anal.} \textbf{15} (1984), 1234--1237.

\bibitem{KM} N. G. Kuznetsov, O. V. Motygin The Steklov problem in a half-plane: the
dependence of eigenvalues on a piecewise constant coefficient. {\it J. Math. Sci.}
\textbf{127} (2005), 2429--2445.

\bibitem{SS} M. Levitin, L. Parnovski, I. Polterovich, D. A. Sher, Sloshing, Steklov
and corners: Asymptotics of sloshing eigenvalues. {\it J. Anal. Math.} \textbf{146}
(2022), 65--125.

\bibitem{W} P. Weidman, Analytical solutions of first-mode sloshing in new
containers. {\it Wave Motion} \textbf{63} (2016), 170--178.


}
\end{thebibliography}
\end{document}